\newtheorem{thm}{Theorem}[section]
\newtheorem{cor}[thm]{Corollary}
\theoremstyle{definition}
\theoremstyle{remark}
\title{On affine connections in a Riemannian manifold with a circulant metric and two circulant affinor structures$^{1}$}
\author{Iva Dokuzova, Dimitar Razpopov}
\date{}
\begin{document}
\footnotetext[1]{This work is partially supported by project RS09 - FMI - 003 of the Scientific Research Fund, Paisii Hilendarski University of Plovdiv, Bulgaria}
\maketitle

\begin{abstract}
In the present paper it is considered a class $V$ of
$3$-dimensional Riemannian manifolds $M$ with a metric $g$ and two
affinor tensors $q$ and $S$. It is defined another metric
$\bar{g}$ in $M$. The local coordinates of all these tensors are
circulant matrices. It is found:

1)\  a relation between curvature tensors $R$ and
$\bar{R}$ of $g$ and $\bar{g}$, respectively;

2)\   an identity of the curvature tensor $R$ of $g$ in
the case when the curvature tensor $\bar{R}$ vanishes;

3)\  a relation between the sectional curvature of a $2$-section of the type $\{x, qx\}$ and the scalar curvature of $M$.
\end{abstract}

\section{Preliminaries}
\thispagestyle{empty}

We consider a $3$-dimensional Riemannian manifold $M$ with a
metric tensor $g$ and two affinor tensors $q$ and $S$ such that:
their local coordinates form circulant matrices. So these matrices
are as follows:
\begin{equation}\label{f1}
    g_{ij}=\begin{pmatrix}
      A & B & B \\
      B & A & B \\
      B & B & A \\
    \end{pmatrix}, \quad A > B > 0,
\end{equation}
where $A$ and $B$ are smooth functions of a point $p(x^{1}, x^{2},
x^{3})$ in some $F\subset R^{3}$,
\begin{equation}\label{f2}
    q_{i}^{.j}=\begin{pmatrix}
      0 & 1 & 0 \\
      0 & 0 & 1 \\
      1 & 0 & 0 \\
    \end{pmatrix},\qquad S_{i}^{j}=\begin{pmatrix}
      -1 & 1 & 1 \\
      1 & -1 & 1 \\
      1 & 1 & -1 \\
    \end{pmatrix}.
\end{equation}

Let $\nabla$ be the connection of $g$. The following results have
been obtained in \cite{1}.
\begin{equation}\label{f3}
    q^{3}=E;\quad g(qu, qv)=g(u,v),\quad u,\ v\in \chi M.
\end{equation}
\begin{equation}\label{f4}
    \nabla q =0 \quad \Leftrightarrow \quad grad A=grad B . S .
\end{equation}
\begin{equation}\label{f5}
    0 < B < A \quad \Rightarrow \quad g \ is \ possitively \ defined.
\end{equation}

If $M$ has a metric $g$ from (\ref{f1}), affinor structures $q$
and $S$ from (\ref{f2}) and $\nabla q=0$, then we note for brevity,
that $M$ is in the class $V$.

We denote $\tilde{q}_{j}^{s}=q_{a}^{s}q_{j}^{a}, \ \Phi_{j}^{s}=
q_{j}^{s}+\tilde{q}_{j}^{s}$, and from (\ref{f2}) we have:
\begin{equation}\label{f6}
    \tilde{q}_{j}^{.s}=\begin{pmatrix}
      0 & 0 & 1 \\
      1 & 0 & 0 \\
      0 & 1 & 0 \\
    \end{pmatrix},\qquad \Phi_{j}^{s}=\begin{pmatrix}
      0 & 1 & 1 \\
      1 & 0 & 1 \\
      1 & 1 & 0 \\
    \end{pmatrix}.
\end{equation}


\section{On affine connections in a Riemannian manifold }

Let $M$ be in $V$.  We note
$f_{ij}=g_{ik}q_{j}^{k}+g_{jk}q_{i}^{k}$, i.e.
\begin{equation}\label{f7}
f_{ij}=\begin{pmatrix}
      2B & A+B & A+B \\
      A+B & 2B & A+B \\
      A+B & A+B & 2B \\
    \end{pmatrix}.
\end{equation}
We calculate $\det f_{ij}=2(A-B)^{2}(A+2B)\neq 0$, so $f$ is
non-degenerated symmetric tensor field. Evidently, we have that
$\nabla q =0$, which thank's to (\ref{f2}), (\ref{f6}),
(\ref{f7}), implies:
\begin{equation}\label{f8}
     \nabla \tilde{q} =0, \qquad \nabla
    f=0, \qquad \nabla S=0, \qquad \nabla\Phi=0.
\end{equation}
For later use, from (\ref{f1}) -- (\ref{f7}), we find the next
identities:
\begin{equation}\label{f9}
    \Phi_{j}^{s}g_{is}=f_{ji}, \qquad
    \Phi_{j}^{s}f_{is}=2g_{ji}+f_{ji}, \qquad
    f_{ji}g^{is}=\Phi_{j}^{s}\qquad
    g_{ji}f^{is}=\frac{1}{2}S_{j}^{s}.
\end{equation}

Further, we suppose $\alpha$ and $\beta$ are two smooth functions
in $F$. Now, we construct the metric $\bar{g}$, as follows
\begin{equation}\label{f10}
    \bar{g}=\alpha .g +\beta .f.
\end{equation}
The local coordinates of $\bar{g}$ are
\begin{equation*}
\bar{g}_{ij}=\begin{pmatrix}
      \alpha A+2\beta B & \beta A+(\alpha+\beta)B & \beta A+(\alpha+\beta)B  \\
      \beta A+(\alpha+\beta)B  & \alpha A+2\beta B & \beta A+(\alpha+\beta)B \\
      \beta A+(\alpha+\beta)B  & \beta A+(\alpha+\beta)B & \alpha A+2\beta B \\
    \end{pmatrix}.
\end{equation*}
In \cite{2} it is proved the next assertion.
\begin{thm}
 Let $M$ be a manifold in $V$, also  $g$ and $\bar{g}$ be two
metrics of $M$, related by (\ref{f10}). Let $\nabla$ and
$\overline{\nabla}$ be the corresponding connections of $g$ and
$\bar{g}$. Then $\overline{\nabla}q=0$ if and only if, when
\begin{equation}\label{statiq5}
    grad \alpha=grad \beta . S.
\end{equation}
\end{thm}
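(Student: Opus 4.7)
My approach is to reduce the claim to the already-established criterion (\ref{f4}), applied to the new metric $\bar g$. The key observation is that $\bar g$ has the same circulant form as $g$: from (\ref{f1}), (\ref{f6}), and (\ref{f7}) one reads off $g = A\,E + B\,\Phi$ and $f = 2B\,E + (A+B)\,\Phi$, so
\begin{equation*}
  \bar g = \alpha g + \beta f = \bar A\, E + \bar B\, \Phi, \qquad \bar A := \alpha A + 2\beta B, \quad \bar B := \beta A + (\alpha+\beta) B.
\end{equation*}
Because the derivation of (\ref{f4}) uses only the form (\ref{f1}) of the metric, the same criterion applies to $\bar g$ with the same affinor $q$ and gives $\overline{\nabla}\, q = 0 \iff \grad \bar A = \grad \bar B \cdot S$; it therefore suffices to show that this equation is equivalent to (\ref{statiq5}).

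Next I would verify the equivalence by direct computation. Since $M \in V$, the hypothesis $\nabla q = 0$ together with (\ref{f4}) supplies $\grad A = \grad B \cdot S$. Expanding $\grad \bar A$ and $\grad \bar B$ by the Leibniz rule, forming $\grad \bar B \cdot S$, and substituting $\grad A = \grad B \cdot S$ wherever it appears, I expect all terms containing $\grad A$ or $\grad B$ to cancel, leaving
\begin{equation*}
  A\, \grad \alpha + 2B\, \grad \beta \;=\; B\, \grad \alpha \cdot S + (A+B)\, \grad \beta \cdot S,
\end{equation*}
which rearranges to $(\grad \alpha - \grad \beta \cdot S)\cdot(A\,E - B\,S) = 0$.

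Finally, I would invoke the invertibility of $AE - BS$: as a polynomial in the circulant $S$ it has eigenvalues $A - B$ and $A + 2B$, both strictly positive under $A > B > 0$, so this forces $\grad \alpha = \grad \beta \cdot S$, which is precisely (\ref{statiq5}). The hard part will be the algebraic bookkeeping in the middle step: the cancellation of the $\grad A, \grad B$ terms hinges on the identities $S = \Phi - E$, $S^2 = 3E - \Phi$, and $S \Phi = 2E$ (immediate from (\ref{f2}) and (\ref{f6})), while the rearrangement to the clean form above rests on the nontrivial matrix identity $(A+B)\,S - 2B\,E = S(AE - BS)$ that follows from them.
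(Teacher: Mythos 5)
The paper does not actually prove this theorem: it states it and defers the proof to reference \cite{2}, so there is no in-paper argument to compare yours against. Judged on its own, your proof is correct and quite clean. The decomposition $\bar g=\bar A\,E+\bar B\,\Phi$ with $\bar A=\alpha A+2\beta B$, $\bar B=\beta A+(\alpha+\beta)B$ matches the displayed matrix of $\bar g_{ij}$; the identities $S=\Phi-E$, $S^{2}=3E-\Phi$, $S\Phi=2E$ all check out; substituting $\grad A=\grad B\cdot S$ does cancel every $\grad A$, $\grad B$ term (the $\beta\,\grad B\cdot S^{2}$ term collapses to $2\beta\,\grad B-\beta\,\grad B\cdot S$, which is exactly what is needed); and $S(AE-BS)=(A+B)S-2BE$ together with the eigenvalues $A-B$ and $A+2B$ of $AE-BS$ finishes the equivalence in both directions, since the final step is multiplication by an invertible matrix. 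The only point you should make explicit rather than assert is the transfer of criterion (\ref{f4}) to $\bar g$: the cited derivation in \cite{1} is carried out for a metric of the form (\ref{f1}) with $A>B>0$, and you need that it in fact uses only the circulant shape and non-degeneracy ($\bar A\neq\bar B$, $\bar A+2\bar B\neq 0$), which holds whenever $\bar g$ is a metric as the theorem hypothesizes; since \cite{1}'s computation is not reproduced here, that step rests on an appeal to how (\ref{f4}) is proved rather than to its statement. With that caveat flagged, the argument is complete.
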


Let $\beta =0$ in (\ref{f10}). Then we have
\begin{equation}\label{f101}
    \bar{g}=\alpha .g.
\end{equation}
The condition (\ref{f101}) defines the well known conformal transformation in the Riemannian manifold $M$.

So, we will consider the case
$\alpha=0$ in (\ref{f10}). We obtain
\begin{equation}\label{f10*}
    \bar{g}_{ij}=\beta .f_{ij}.
\end{equation}
Then from (\ref{statiq5}) we can get that $\overline{\nabla}q=0$
if and only if, when
$\beta$ is a constant.
Further, we are interested in the case $\overline{\nabla}q\neq 0$, i.e. $\beta$ isn't a constant.
Thank's to (\ref{f10*}) we get
\begin{equation}\label{f14}
\nabla_{k}\bar{g}_{ij}=\beta_{k}f_{ij},\qquad
\beta_{k}=\nabla_{k}\beta .
\end{equation}
We have the well known identities:
\begin{equation}\label{f11}
\overline{\nabla}_{k}\bar{g}_{ij}=\partial_{k}\bar{g}_{ij}-\overline{\Gamma}_{ki}^{a}\bar{g}_{aj}-\overline{\Gamma}_{kj}^{a}\bar{g}_{ai},
\end{equation}
\begin{equation}\label{f12}
\nabla_{k}\bar{g}_{ij}=\partial_{k}\bar{g}_{ij}-\Gamma_{ki}^{a}\bar{g}_{aj}-\Gamma_{kj}^{a}\bar{g}_{ai},
\end{equation}
\begin{equation}\label{f13}
\overline{\nabla}_{k}\bar{g}_{ij}=0.
\end{equation}
Using (\ref{f9}), (\ref{f14}) -- (\ref{f13}), for the tensor
$T_{ik}^{s}=\overline{\Gamma}_{ki}^{s}-\Gamma_{ki}^{s}$ of the affine deformation of $\nabla$ and $\overline{\nabla}$,
we find
\begin{equation}\label{f15}
T_{ik}^{s}=\beta_{k}\delta_{i}^{s}+\beta_{i}\delta_{k}^{s}-\frac{1}{2}\beta^{a}S_{a}^{s}f_{ik},
\qquad \beta_{k}\sim \frac{\beta_{i}}{2\beta}.
\end{equation}
We have that $\overline{\nabla}_{i}q_{j}^{k}=\nabla_{i}
q_{j}^{k}-\beta_{j}q_{i}^{k}+\tilde{\beta}_{j}\delta_{i}^{k}-\dfrac{1}{2}\beta^{a}S_{a}^{k}q_{j}^{t}f_{ti}+\dfrac{1}{2}\beta^{a}S_{a}^{t}q_{t}^{k}f_{ij}$.

Let $R$ be the curvature tensor field of $\nabla$. Let $\bar{R}$
be the curvature tensor field of $\overline{\nabla}$. It is well
known the relation (see \cite{3})
\begin{equation}\label{f16}
 \overline{R}^{h}_{ijk} = R^{h}_{ijk} + \nabla_{j}T^{h}_{ik}-\nabla_{k}T^{h}_{ij}
+T^{s}_{ik}T^{h}_{sj}-T^{s}_{ij}T^{h}_{sk}.
\end{equation}
From (\ref{f15}) and (\ref{f16}) after some calculations we
obtain
\begin{equation}\label{f17}
\begin{split}
     \overline{R}^{h}_{ijk}& = R^{h}_{ijk} + \delta_{k}^{h}(\nabla_{j}\beta_{i}-\beta_{i}\beta_{j}+\varphi f_{ij})-\delta_{j}^{h}(\nabla_{k}\beta_{i}-\beta_{i}\beta_{k}+\varphi f_{ik})\\
 &+\frac{1}{2}f_{ij}S_{t}^{h}(\nabla_{k}\beta^{t}-\beta_{k}\beta^{t})-\frac{1}{2}f_{ik}S_{t}^{h}(\nabla_{j}\beta^{t}-\beta_{j}\beta^{t}), \quad \varphi= \frac{1}{2}\beta^{t}\beta_{s}S_{t}^{s}.
\end{split}
\end{equation}
\begin{thm}\label{th2.4}
    Let $M$ be in $V$, also $\nabla$ and $\overline{\nabla}$ be the Riemannian connections of $g$ and $\bar{g}$, related by (\ref{f10*}). If $\overline{\nabla}$
    is a locally flat connection, then
    curvature tensor field $R$ of $\nabla$ is
 \begin{equation}\label{f18}
\begin{split}
R(x, y, z, u)& = \frac{\tau}{6}[(2g(x, u)g(y, z)-2g(x,z)g(y,u)\\
&\mbox{}+(g(qx, u)+g(x, qu))(g(qy, z)+ g(y, qz))\\
\ &\mbox{}-(g(qx,z)+g(x,qz))(g(qy,u)+g(y,qu))].
\end{split}
\end{equation}
\end{thm}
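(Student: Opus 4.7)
Setting $\bar R^{h}_{ijk}=0$ in (\ref{f17}) and solving for $R^{h}_{ijk}$ gives
\[
R^{h}_{ijk} = \delta^{h}_{j} P_{ik} - \delta^{h}_{k} P_{ij} + \tfrac{1}{2}\, S^{h}_{t}\bigl(f_{ik}Q^{t}_{j} - f_{ij}Q^{t}_{k}\bigr),
\]
where I abbreviate $P_{ij}:=\nabla_{j}\beta_{i}-\beta_{i}\beta_{j}+\varphi f_{ij}$ (symmetric in $i,j$ because $\nabla$ is torsion-free) and $Q^{t}_{j}:=\nabla_{j}\beta^{t}-\beta_{j}\beta^{t}$. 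The plan is to eliminate the $\beta$-dependence in favour of a single scalar (which will turn out to be a sixth of the scalar curvature) by combining the Riemann symmetries with the circulant algebra recorded in (\ref{f9}).

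\textbf{Main steps.} First I would lower the upper index, $R_{hijk}=g_{h\ell}R^{\ell}_{ijk}$, and invoke the three standard symmetries of a curvature tensor: antisymmetry $R_{hijk}=-R_{ihjk}$, pair symmetry $R_{hijk}=R_{jkhi}$, and the first Bianchi identity. A useful auxiliary identity is $f_{ij}S^{j}_{t}=2g_{it}$, which follows from (\ref{f9}) by multiplying $g_{ji}f^{is}=\tfrac12 S^{s}_{j}$ by $f_{st}$. These constraints should force $P_{ij}$ and the lowered tensor $Q_{ij}:=g_{it}Q^{t}_{j}$ to lie in the two-dimensional pencil spanned by $g_{ij}$ and $f_{ij}$, and moreover should tie the coefficients of $P$ to those of $Q$, leaving a single free scalar. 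Second, I would contract to obtain the Ricci tensor $R_{ik}=R^{j}_{ijk}$, and then contract against $g^{ik}$ to obtain the scalar curvature $\tau$; a renewed use of (\ref{f9}) pins the surviving scalar down to $\tau/6$. Finally, rewriting $f_{ij}=g_{ik}q^{k}_{j}+g_{jk}q^{k}_{i}$, so that $f(x,u)=g(qx,u)+g(x,qu)$, reshapes the fully covariant formula into (\ref{f18}).

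\textbf{Main obstacle.} The chief technical hurdle is the index bookkeeping: the expression for $R^{h}_{ijk}$ a priori involves four apparently independent objects (the Hessian $\nabla_{j}\beta_{i}$, the product $\beta_{i}\beta_{j}$, the function $\varphi$, and the tensor $Q^{t}_{j}$), none of which manifestly align with the clean building blocks $g$ and $f$. It is the special circulant structure of this $3$-dimensional setting, encoded by the identities in (\ref{f9}) and by relations like $f_{ij}S^{j}_{t}=2g_{it}$, that collapses all of this data under the Riemann symmetries to a single scalar. Tracking the contractions carefully enough to identify that scalar as $\tau/6$, rather than some other trace combination, is the step demanding the most patience.
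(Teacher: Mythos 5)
Your starting point is exactly the paper's: setting $\bar R=0$ in (\ref{f17}) and reading off $R^{h}_{ijk}=\delta^{h}_{j}P_{ik}-\delta^{h}_{k}P_{ij}+\tfrac12 S^{h}_{t}(f_{ik}Q^{t}_{j}-f_{ij}Q^{t}_{k})$ is (\ref{f19}), and your auxiliary identity $f_{ij}S^{j}_{t}=2g_{it}$ is a correct consequence of (\ref{f9}). The gap is in the next step. You propose that the algebraic curvature symmetries (antisymmetry, pair symmetry, first Bianchi), combined with the circulant identities, will force $P_{ij}$ and the lowered $Q_{ij}$ into the pencil of $g$ and $f$ and ``tie the coefficients of $P$ to those of $Q$, leaving a single free scalar.'' They cannot, if $P$ and $Q$ are treated as independent unknowns: the pair $(P,Q)=(\lambda g,0)$ yields $R_{hijk}=\lambda(g_{hj}g_{ki}-g_{hk}g_{ij})$, a constant--curvature--type tensor that enjoys every algebraic symmetry (and in dimension $3$ the first Bianchi identity adds nothing), yet has Ricci tensor proportional to $g$ --- nondegenerate, hence incompatible with $R_{ki}=\frac{\tau}{6}(2g_{ki}-f_{ki})$ in (\ref{f33}). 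So symmetry considerations alone leave at least a two--parameter family of admissible curvature tensors and cannot single out (\ref{f18}).

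The missing ingredient --- the actual engine of the paper's proof --- is that $Q$ is not independent of $P$ at all: since $Q^{t}_{j}=g^{ta}(\nabla_{j}\beta_{a}-\beta_{j}\beta_{a})=g^{ta}(P_{ja}-\varphi f_{ja})$, the identity $g_{ji}f^{is}=\tfrac12 S^{s}_{j}$ from (\ref{f9}) together with $\nabla f=\nabla S=0$ gives $Q^{h}_{k}=P_{ka}f^{ah}-\varphi\delta^{h}_{k}$, which is what lets the paper eliminate $Q$ in favour of the Ricci tensor (via $R_{ij}=-P_{ij}-\psi f_{ij}$ and (\ref{f22})). After that the paper never invokes the pair symmetry or Bianchi; it only takes traces, using the symmetry of $R_{ij}$ once to solve a two--equation system for $\Phi^{a}_{k}R_{ia}$. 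Note also that a single contraction does not close the argument: the paper must carry the second scalar curvature $\tau^{*}=R_{ij}f^{ij}$ alongside $\tau$ and extract $\tau^{*}=-\tau$ from the second equation of (\ref{f30}) before the coefficient $\tau/6$ appears; your plan, which tracks only $\tau$, would stall at the same point.
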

\begin{proof}
We have $\overline{R}=0$. From (\ref{f17}) we find
\begin{equation}\label{f19}
     R^{h}_{ijk} = \delta_{j}^{h}P_{ki}-\delta_{k}^{h}P_{ij}
 -f_{ij}Q_{k}^{h}+f_{ik}Q_{j}^{h},
\end{equation}
where $P_{ki}=\nabla_{k}\beta_{i}-\beta_{i}\beta_{k}+\varphi
f_{ik}$,\
$Q_{k}^{h}=\dfrac{1}{2}S_{t}^{h}(\nabla_{k}\beta^{t}-\beta_{k}\beta^{t})$.

Now, we contract $k=h$ in (\ref{f19}) and with the help of
(\ref{f9}) we get
\begin{equation}\label{f20}
    R_{ij}=-P_{ij}-\psi f_{ij}, \qquad
    \psi=\frac{1}{2}S_{t}^{h}\nabla_{h}\beta^{t}.
\end{equation}
We note that $R_{ij}=R^{k}_{ijk}$ are the local coordinates of the
Ricci tensor of $\nabla$, also $\tau=R_{ij}g^{ij}$ and
$\tau^{*}=R_{ij}f^{ij}$ are the first and the second scalar
curvatures of $M$, respectively. The identity (\ref{f20}) implies
\begin{equation}\label{f21}
    \tau^{*}=-2\varphi-4\psi.
\end{equation}
Using (\ref{f9}) we have that $Q_{k}^{h}=P_{ka}f^{ah}-\varphi
\delta_{k}^{h}$, and from (\ref{f20}) we get
\begin{equation}\label{f22}
    Q_{k}^{h}=-R_{ka}f^{ah}-(\psi+\varphi) \delta_{k}^{h}.
\end{equation}
We substitute (\ref{f20}) -- (\ref{f22}) in (\ref{f19}), and we find
\begin{equation}\label{f23}
     R^{h}_{ijk} = \delta_{k}^{h}(R_{ij}-\frac{\tau^{*}}{2}f_{ij})-\delta_{j}^{h}(R_{ki}-\frac{\tau^{*}}{2}f_{ki})
 +f_{ij}R_{ka}f^{ah}-f_{ik}R_{ja}f^{ah}.
\end{equation}
From (\ref{f23}) and  $R_{k}^{h}=R_{ijk}^{h}g^{ij}$ we have
\begin{equation}\label{f24}
    2R_{k}^{h}=\tau\delta_{k}^{h}+\frac{\tau^{*}}{2}\Phi_{k}^{h}-\Phi_{k}^{t}R_{ta}f^{ah}.
\end{equation}

Now, we contract (\ref{f24}) with $f_{ih}$, and from identity
$f_{ih}R_{k}^{h}= \Phi_{i}^{a}R_{ka}$ we obtain:
\begin{equation*}
    2\Phi_{i}^{a}R_{ka}=(\frac{\tau^{*}}{2}+\tau)f_{ki}+\tau^{*}g_{ki}-\Phi_{k}^{t}R_{ti}
\end{equation*}
and
\begin{equation*}
    2\Phi_{k}^{a}R_{ia}=(\frac{\tau^{*}}{2}+\tau)f_{ki}+\tau^{*}g_{ki}-\Phi_{i}^{t}R_{tk}.
\end{equation*}
The last system of two equations implies
\begin{equation}\label{f27}
    \Phi_{k}^{a}R_{ia}=\frac{1}{3}((\frac{\tau^{*}}{2}+\tau)f_{ki}+\tau^{*}g_{ki}).
\end{equation}
From (\ref{f9}) and (\ref{f27}) we find
\begin{equation}\label{f28}
    \Phi_{k}^{a}R_{ia}f^{ij}=\frac{1}{3}((\frac{\tau^{*}}{2}+\tau)\delta_{k}^{j}+\tau^{*}S_{k}^{j}).
\end{equation}
After substituting (\ref{f28}) in (\ref{f24}), we get
\begin{equation*}
    R_{k}^{h}=\frac{\tau}{3}\delta_{k}^{h}+\frac{\tau^{*}}{6}\Phi_{k}^{h},
\end{equation*}
and also
\begin{equation}\label{f30}
    R_{ki}=\frac{\tau}{3}g_{ki}+\frac{\tau^{*}}{6}f_{ki},\quad R_{ki}f^{ih}=\frac{\tau}{6}S_{k}^{h}+\frac{\tau^{*}}{6}\delta_{k}^{h}.
\end{equation}
From the last equation we find that $$\tau^{*}=-\tau.$$
The equations (\ref{f30}) get the form
\begin{equation}\label{f33}
    R_{ki}=\frac{\tau}{6}(2g_{ki}-f_{ki}),\quad R_{ki}f^{ih}=\frac{\tau}{6}(S_{k}^{h}-\delta_{k}^{h}).
\end{equation}
Finely we obtain:
\begin{equation*}
     R^{h}_{ijk} =
     \frac{\tau}{6}(2\delta_{k}^{h}g_{ij}-2\delta_{j}^{h}g_{ki}+(\delta_{k}^{h}+S_{k}^{h})f_{ij}-(\delta_{j}^{h}+S_{j}^{h})f_{ki})
\end{equation*}
and
\begin{equation*}
     R_{hijk} = \frac{\tau}{6}(2g_{kh}g_{ij}-2g_{hj}g_{ki}+f_{kh}f_{ij}-f_{hj}f_{ki}).
\end{equation*}
The last identity is equivalent to (\ref{f18}).

We note that $R_{ijk}^{h}\neq 0$, so $\nabla$
    isn't a locally flat connection.
\end{proof}
Let $p$ be a point in $M$ and $x$, $y$ be two linearly
independent vectors in $T_{p}M$. It is known the value
\begin{equation*}
    \mu(x,y)=\frac{R(x, y, x, y)}{g(x, x)g(y, y)-g^{2}(x, y)}
\end{equation*}
is the sectional curvature of $2$-section $\{x, y\}$.
\begin{cor}
Let $M$ satisfy the conditions of the Theorem \ref{th2.4}. Let $x$
be an arbitrary vector in $T_{p}M$, and $\varphi$ be the angle
between $x$ and $qx$. Then the sectional curvature of a
$2$-section $\{x, qx\}$ is
\begin{equation*}
    \mu(x,qx)=-\frac{\tau}{6}\tan^{2}\frac{\varphi}{2}, \qquad\varphi\in
    (0,\dfrac{2\pi}{3}).
\end{equation*}
\end{cor}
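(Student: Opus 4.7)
The plan is to substitute $y = u = qx$ and $z = x$ into the curvature formula (\ref{f18}), simplify the result using the two identities in (\ref{f3}), and then compare it with the denominator in the definition of $\mu(x, qx)$.

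Writing $a = g(x,x)$ and $b = g(x, qx)$, I first reduce every inner product that appears in $R(x, qx, x, qx)$ to $a$ or $b$. The invariance $g(qu, qv) = g(u, v)$ yields $g(qx, qx) = a$; combined with $q^3 = E$ it also gives $g(x, q^2 x) = g(qx, q^3 x) = g(qx, x) = b$, and analogously $g(q^2 x, x) = b$ and $g(q^2 x, qx) = g(qx, x) = b$. The four groups of terms on the right of (\ref{f18}) then contribute $2b^2$, $-2a^2$, $(a + b)^2$, and $-4b^2$, and the key collapse
\[
2b^2 - 2a^2 + (a + b)^2 - 4b^2 \;=\; -(a - b)^2
\]
gives $R(x, qx, x, qx) = -\frac{\tau}{6}(a - b)^2$. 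The denominator of $\mu(x, qx)$ reduces under the same identities to $a^2 - b^2 = (a - b)(a + b)$, whence
\[
\mu(x, qx) \;=\; -\frac{\tau}{6}\cdot\frac{a - b}{a + b}.
\]

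To finish, the angle satisfies $\cos \varphi = g(x, qx)/\sqrt{g(x,x)\,g(qx, qx)} = b/a$, and the half-angle identity
\[
\frac{a - b}{a + b} \;=\; \frac{1 - \cos\varphi}{1 + \cos\varphi} \;=\; \tan^2 \frac{\varphi}{2}
\]
delivers the claimed formula. The range $\varphi \in (0, 2\pi/3)$ reflects the linear independence of $x$ and $qx$, which excludes $\varphi = 0$ (occurring when $x$ is proportional to the $q$-fixed direction $x^1 = x^2 = x^3$), together with the lower bound $b/a > -1/2$; the latter follows from the explicit component formulas $g(x, x) = Av + 2Bw$ and $g(x, qx) = Bv + (A + B)w$ with $v = \sum_i (x^i)^2$ and $w = \sum_{i<j} x^i x^j$, combined with $A > B > 0$ from (\ref{f1}) and the elementary inequality $-v/2 \le w \le v$. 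The only step requiring care is the bookkeeping of the eight inner products after the substitution; no deeper geometric input beyond (\ref{f18}) and (\ref{f3}) is needed.
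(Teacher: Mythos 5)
Your computation is correct and is the direct route the statement calls for: substitute $y=u=qx$, $z=x$ into (\ref{f18}), reduce all inner products to $a=g(x,x)$ and $b=g(x,qx)$ via (\ref{f3}), and apply the half-angle identity. The paper gives no proof of this corollary at all, so there is nothing to compare against; your bookkeeping ($2b^2-2a^2+(a+b)^2-4b^2=-(a-b)^2$) checks out, as does the identification of $\varphi=0$ with the $q$-fixed direction $x^1=x^2=x^3$.

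The one inaccuracy is your claim that $b/a>-1/2$ strictly. From your own formulas, $\cos\varphi+\tfrac12=\frac{(A+2B)(v/2+w)}{Av+2Bw}$, and $v/2+w=\tfrac12(x^1+x^2+x^3)^2$ vanishes for, e.g., $x=(1,-1,0)$, where one computes $\cos\varphi=-1/2$ exactly; so the correct bound is $b/a\ge-1/2$ and the value $\varphi=2\pi/3$ is attained (the displayed formula still holds there, with $\tan^2(\pi/3)=3$). The open right endpoint is thus an imprecision already present in the paper's statement rather than something your argument can legitimately derive.
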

\begin{cor}
Let $M$ satisfy the conditions of the Theorem \ref{th2.4}. Then
the Ricci tensor of $g$ is degenerated.
\end{cor}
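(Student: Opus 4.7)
The plan is to read off the degeneracy directly from the explicit Ricci formula that was already established in the proof of Theorem~\ref{th2.4}. That formula, namely the first identity in (\ref{f33}), asserts
\[
R_{ki}=\frac{\tau}{6}\bigl(2g_{ki}-f_{ki}\bigr),
\]
so the Ricci tensor is a scalar multiple of the fixed symmetric tensor $2g-f$. Hence to prove that $\det(R_{ki})=0$ it is enough to show that the tensor $2g-f$ has vanishing determinant in coordinates, the case $\tau=0$ being trivial.

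Using the explicit matrices (\ref{f1}) and (\ref{f7}), I would subtract entrywise: the diagonal of $2g-f$ equals $2A-2B$, while every off-diagonal entry equals $2B-(A+B)=-(A-B)$. Consequently
\[
2g_{ki}-f_{ki}=(A-B)\begin{pmatrix} 2 & -1 & -1 \\ -1 & 2 & -1 \\ -1 & -1 & 2 \end{pmatrix},
\]
and since each row of the right-hand circulant matrix sums to zero, the vector $(1,1,1)^{\top}$ lies in its kernel; its determinant therefore vanishes. A more structural way to see the same fact is to combine the identity $\Phi_{j}^{s}g_{is}=f_{ji}$ from (\ref{f9}) with the observation, read off from (\ref{f6}), that $\Phi$ has $2$ as an eigenvalue with eigenvector $(1,1,1)^{\top}$; this instantly forces $(2g-f)\,(1,1,1)^{\top}=0$ without any entrywise computation.

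The main obstacle is essentially nonexistent: all of the analytic work was already carried out inside Theorem~\ref{th2.4}, where the flatness hypothesis $\overline{R}=0$ pinned down the Ricci tensor up to a scalar multiple of the specific combination $2g-f$. Since that combination is algebraically forced to be singular for every admissible pair $A,B$ satisfying (\ref{f1}), the degeneracy of the Ricci tensor follows immediately, with no further geometric input required.
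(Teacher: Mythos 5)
Your proof is correct and follows the same route as the paper: the paper's entire proof is the remark that the claim "follows from (\ref{f33})", i.e.\ from $R_{ki}=\frac{\tau}{6}(2g_{ki}-f_{ki})$, and you simply supply the omitted verification that $2g-f=(A-B)\,\mathrm{diag\text{-}circulant}(2,-1,-1)$ annihilates $(1,1,1)^{\top}$ and is therefore singular. Both your entrywise computation and the eigenvector argument via $\Phi$ are valid fillings of that gap.
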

The proof follows from (\ref{f33}).

Note. Let $\{x, qx\}$  be a $2$-section and $g(x, qx)=0$. Then
\begin{equation*}
    \mu(x,qx)=-\frac{\tau}{6}.
\end{equation*}

\vspace{6mm}
\author{Iva Dokuzova \\University of Plovdiv\\ FMI,  Department of
Geometry\\236 Bulgaria Blvd.\\Bulgaria 4003\\\vspace{6mm}
e-mail:dokuzova@uni-plovdiv.bg}\\
\author{Dimitar Razpopov \\ Department of Mathematics and Physics\\ Agricultural University of Plovdiv \\12 Mendeleev Blvd.\\
Bulgaria 4000 \\
e-mail:drazpopov@qustyle.bg}

\vspace{6mm} \textbf{Mathematics Subject Classification (2010)}:
53C15, 53B20

\vspace{4mm} \textbf{Keywords}: Riemannian manifold, affinor
structure, curvatures

\end{document}